\RequirePackage{fix-cm}

\documentclass{amsart}

\usepackage{mathptmx}

\usepackage[all]{xy}
\usepackage{url}
\usepackage{amssymb}
\usepackage{amsmath}
\usepackage{array}

\theoremstyle{plain}
\newtheorem{theorem}{Theorem}[section]
\newtheorem{proposition}[theorem]{Proposition}
\newtheorem{lemma}[theorem]{Lemma}
\newtheorem{corollary}[theorem]{Corollary}
\theoremstyle{definition}
\newtheorem{definition}[theorem]{Definition}
\newtheorem{example}[theorem]{Example}
\theoremstyle{remark}
\newtheorem{remark}[theorem]{Remark}

\newcommand{\propref}[1]{Proposition~\ref{#1}}
\newcommand{\thmref}[1]{Theorem~\ref{#1}}
\newcommand{\lemmaref}[1]{Lemma~\ref{#1}}
\newcommand{\cororef}[1]{Corollary~\ref{#1}}
\newcommand{\exref}[1]{Example~\ref{#1}}
\newcommand{\secref}[1]{Section~\ref{#1}}
\newcommand{\remref}[1]{Remark~\ref{#1}}

\newcommand{\PP}{\mathbb{P}}
\newcommand{\CC}{\mathbb{C}}
\newcommand{\ZZ}{\mathbb{Z}}
\newcommand{\aut}[1]{\operatorname{Aut} (#1)}
\newcommand{\GL}[2]{\operatorname{GL}_{#1} (#2)}
\newcommand{\PGL}[2]{\operatorname{PGL}_{#1} (#2)}
\newcommand{\Sp}[2]{\operatorname{Sp}_{#1} (#2)}
\newcommand{\SL}[2]{\operatorname{SL}_{#1} (#2)}
\newcommand{\orb}[2]{\operatorname{orb}_{#1} (#2)}
\newcommand{\rank}[1]{\operatorname{rank} (#1)}
\newcommand{\pic}[1]{\operatorname{Pic} (#1)}
\newcommand{\piczero}[1]{\operatorname{Pic}^0 (#1)}
\newcommand{\discr}[1]{\operatorname{discr} (#1)}
\newcommand{\NL}[1]{\operatorname{NL}_{#1}}
\newcommand{\NS}[1]{\operatorname{NS}(#1)}

\renewcommand{\dim}[1]{\operatorname{dim} (#1)}

\newcolumntype{K}{>{\centering\arraybackslash$}p{0.4cm}<{$}}

\begin{document}

\title{Curves on Heisenberg invariant quartic surfaces in projective 3-space}

\author{David Eklund}
\address{Department of Mathematics, KTH, 100 44 Stockholm, Sweden}
\email{daek@math.kth.se}
\urladdr{https://people.kth.se/~daek/}
\keywords{quartic surface, conic, finite Heisenberg group, Picard group}
\subjclass[2010]{14J25,14J28,14C22}

\begin{abstract}
This paper is about the family of smooth quartic surfaces $X \subset
\PP^{3}$ that are invariant under the Heisenberg group $H_{2,2}$. For
a very generic $X$, we show that the Picard number of $X$ is 16 and
determine its Picard lattice. It turns out that a very generic $X$
contains 320 irreducible conics which generate the Picard group of
$X$.
\end{abstract}

\maketitle

\section{Introduction}
Let $A$ be an Abelian surface over $\CC$, that is a projective group
variety of dimension 2. The subgroup $A_2=\{a \in A:2a=0\}$ has order
16 and therefore $A_2 \cong (\ZZ / 2\ZZ)^4$. The
involution $i:A \rightarrow A:a \mapsto -a$ induces a $\ZZ /
2\ZZ$ action on $A$ and the quotient, which we denote by $K_A$,
is called the Kummer surface of $A$. If $A$ admits a certain kind of
line bundle (see \secref{sec:Kummer}) there is an induced map $A
\rightarrow \PP^3$ which factors through an embedding $K_A \rightarrow
\PP^3$ such that the image of the Kummer surface is a quartic with 16
nodes. Moreover, the natural action of $A_2$ on $K_A$ extends to a
linear action on $\PP^3$. In one set of coordinates $(x,y,z,w)$ on
$\PP^3$ this action is given by identifying $A_2$ with the subgroup of
$\aut{\PP^{3}}$ which is generated by the following four
transformations:
\begin{center}
\begin{tabular}{ll}
$(x,y,z,w) \mapsto (z,w,x,y)$, & $(x,y,z,w) \mapsto (y,x,w,z)$, \\
$(x,y,z,w) \mapsto (x,y,-z,-w)$, & $(x,y,z,w) \mapsto (x,-y,z,-w)$.
\end{tabular}
\end{center}
The subject of this paper is the family of all quartic surfaces in
$\PP^{3}$ which are invariant under these transformations. We will
refer to such surfaces as \emph{Heisenberg invariant quartics} or just
\emph{invariant quartics}. This family of quartics appeared in the
classical treatises \cite{H,J} and also in several later works
\cite{BN,Mum,V}. The family is parameterized by $\PP^{4}$ and the
subfamily of Kummer surfaces described above constitutes a Zariski
open dense subset of a hypersurface $S_3 \subset \PP^{4}$ known as the
Segre cubic. However, the general Heisenberg invariant quartic is
smooth. Special smooth members of the family appear throughout the
literature, for example the Fermat quartic $x^4+y^4+z^4+w^4=0$ which
is studied in \cite{Se} and the pencil $x^4+y^4+z^4+w^4+\lambda
xyzw=0$, which has been studied from the point of view of mirror
symmetry \cite{Do} and the periodicity of the Clebsch contravariant
\cite{E}. In \cite{BN}, Barth and Nieto study the locus of Heisenberg
invariant quartics that contain a line and find a quintic threefold
$N_5 \subset \PP^{4}$ such that the general point corresponds to a
desingularized Kummer surface of an Abelian surface with a $(1,3)$
polarization. Prior to that, these surfaces had been discovered by
Traynard \cite{T} and discussed by Godeaux \cite{G} and Naruki
\cite{Na}. The present paper was motivated by the following question:
which Heisenberg invariant quartics contain a conic? It turns out that
a general invariant quartic contains 320 smooth conics. The conics are
found by a direct computation using the geometry of the
family. Another result of this paper is that a very general member of
the family of surfaces has Picard number 16. This is in accordance
with the fact that certain moduli spaces of K3 surfaces whose Picard
lattices contain (an isomorphic copy of) a fixed lattice $M$ have
dimension $20-\rank{M}$. The group action on the surfaces induces an
action on the Picard group and we also show that, for a very general
surface in the family, the sublattice of invariant divisor classes is
generated by the class of the hyperplane section. In particular, any
invariant curve on such a surface is a complete intersection. Next we
determine the Picard lattice of a very general Heisenberg invariant
quartic and show that it is generated by the 320 smooth conics. This
is done by computing the configuration of the 320 conics as well as
using some general facts on the existence of curves on Kummer
surfaces.

The paper is organized as follows. \secref{sec:family} and
\secref{sec:moduli} set the notation and review some results that we
need for the sequel. In \secref{sec:picard} we determine the Picard
number of a very generic invariant quartic. \secref{sec:conics}
concerns the existence of the 320 conics. In \secref{sec:lines} and
\secref{sec:NS} we study configurations of lines on Heisenberg
invariant quartic surfaces and use the results to determine the
configuration of the 320 smooth conics on a very general surface. In
\secref{sec:picardgroup} we determine the Picard lattice of a very
general surface in the family up to isomorphism and show that it is
generated by the 320 smooth conics.

\section*{Acknowledgements}
I thank Kristian Ranestad for guidance and inspiration, as well as
making it possible for me spend two months during the spring of 2009
at the University of Oslo. Thanks to Igor Dolgachev for pointing out
relevant references. During the course of this work, the software
packages \emph{Bertini} \cite{BHSW} and \emph{Macaulay 2} \cite{GS}
were used for experimentation. This is a pre-print of an article
published in European Journal of Mathematics, the final version is
available at \url{https://doi.org/10.1007/s40879-018-0216-2}.

\section{The family of invariant quartics} \label{sec:family}

\subsection{Kummer surfaces} \label{sec:Kummer}

We begin with an overview of Kummer surfaces, for proofs and notation
see \cite{BL,GD2,H,Mum}. Let $A$ be an Abelian surface over
$\CC$. The subgroup of 2-torsion points $A_2=\{a \in A:2a=0\}$ has
order 16 and hence $A_2 \cong (\ZZ / 2\ZZ)^4$. To an
element $a \in A$ we associate a translation map $t_a:A \rightarrow
A:x \mapsto x+a$. The involution $i:A \rightarrow A:a \mapsto -a$
induces a $\ZZ / 2\ZZ$ action on $A$ and the quotient
$K_A=A/\{1,i\}$ is an algebraic surface called the Kummer surface of
$A$. Let $\pi:A\rightarrow K_A$ be the projection. The Kummer surface
has 16 singular points, namely $\pi(A_2)$, and the action of $A_2$ on
$A$ by translations induces an action on $K_A$. For an ample line
bundle $L$ on $A$ we define the Heisenberg group $$H(L)=\{a \in
A:t_a^{\ast} L \cong L\} \subset A,$$ and the set
$$\mathcal{G}(L)=\{(x,\phi): x \in H(L), \;
\phi:L\overset{\cong}{\rightarrow} t_x^{\ast} L\}.$$ If $(x,\phi) \in
\mathcal{G}(L)$ and $(y,\psi) \in \mathcal{G}(L)$ then there is an
induced isomorphism $t_x^* \psi :t_x^* L \rightarrow
t_x^*(t_y^*L)$. Using that $t_x^*(t_y^*L)=t_{x+y}^* L$ we put a group
structure on $\mathcal{G}(L)$ by letting $$(y,\psi)(x,\phi)=(x+y,t_x^*
\psi \circ \phi).$$ These two groups are connected by an exact
sequence,
$$1\rightarrow \CC^{\ast} \rightarrow \mathcal{G}(L) \rightarrow H(L)
\rightarrow 1,$$ where $\mathcal{G}(L) \rightarrow
H(L):(x,\phi)\mapsto x$. The kernel of the map $\mathcal{G}(L)
\rightarrow H(L)$ is the group of automorphisms of $L$, which is the
multiplicative group $\CC^{\ast}$ acting by multiplication by
constants. We will consider the case where $A$ has a principal
polarization $L'$, that is $L'$ is an ample line bundle on $A$ whose
elementary divisors are both equal to 1. In addition we assume that
$L'$ is symmetric and irreducible. The former means that $i^*L' \cong
L'$ where $i:A \rightarrow A:a \mapsto -a$, and the latter means that
the polarized Abelian surface $(A,L')$ does not split as a product of
elliptic curves. We say that the line bundle $L=L' \otimes L'$ is of
type $(2,2)$. Then $H(L)=A_2$, $\dim{\Gamma(A,L)}=4$ and if $D$ is a
divisor on $A$ that corresponds to $L$, then $D^2=8$. The group
$\mathcal{G}(L)$ has an action on the space of global sections of $L$:
for $z=(x,\phi)$ and $s \in \Gamma(A,L)$, $\phi$ induces a section
$\phi(s)$ of $t_x^*L$ and the translation $t_{-x}^*$ induces a section
$t_{-x}^*(\phi(s))$ of $L=t_{-x}^*(t_x^*L)$. Thus we put
$zs=t_{-x}^{\ast}(\phi(s))$. This makes $\Gamma(A,L)$ into an
irreducible $\mathcal{G}(L)$ module such that $\CC^{\ast}$ acts by
rescaling. After choosing a basis of $\Gamma(A,L)$, this gives a
faithful linear action of $H(L)=A_2$ on $\PP^{3}$. The rational map
$A\rightarrow \PP^{3}$ induced by $L$ is defined everywhere and
factors through an embedding $K_A \rightarrow \PP^{3}$ and the image
of the Kummer surface is a quartic with 16 nodes. This is the maximal
number of nodes of a quartic surface in $\PP^{3}$ and any quartic in
$\PP^{3}$ with 16 nodes is a Kummer surface. For each of the nodes
$p$, there is a plane $P$ which contains $p$ and 5 other points in the
orbit of $p$ under $H(L)$. Moreover, these 6 points lie on a
non-degenerate conic $C$ and the plane $P$ touches the Kummer surface
along $C$. We say that $P$ is a trope. In total we have 16 tropes
arising this way and together with the 16 nodes they form Kummer's
$16_6$ configuration: each plane contains 6 points and there are 6
planes through each point. Now, the $H(L)$ action on $\PP^{3}$
restricts to the given $A_2$ action on $K_A$ and the embedded Kummer
surface is thus invariant under the linear action on the ambient
space. The Stone-von-Neumann-Mumford theorem states that, up to
isomorphism, $\mathcal{G}(L)$ has a unique irreducible representation
such that $\CC^{\ast}$ acts by rescaling. Throughout the paper we will
fix coordinates on $\PP^3$ and the particular action of $H(L)$ given
in the introduction. Fixing the action of $H(L)$ merely reflects a
choice of coordinates though, see for example \cite{GD2} Lemma 1.52.

\subsection{The Heisenberg group} \label{sec:Hgroup}

Let $(x,y,z,w)$ be coordinates on $\CC^{4}$ and consider the following
four elements of $\SL{4}{\CC}$:
\begin{center}
\begin{tabular}{l}
$\sigma_1:(x,y,z,w) \mapsto (z,w,x,y)$ \\ 
$\sigma_2:(x,y,z,w) \mapsto (y,x,w,z)$ \\ 
$\tau_1:(x,y,z,w) \mapsto (x,y,-z,-w)$ \\ 
$\tau_2: (x,y,z,w) \mapsto (x,-y,z,-w).$
\end{tabular}
\end{center}
Let $H_{2,2}$ be the subgroup of $\SL{4}{\CC}$ generated by $\sigma_1,
\sigma_2, \tau_1$, and $\tau_2$. This group is of order 32 and the
center and commutator of $H_{2,2}$ are both equal to $\{1,-1\}$, where
$1$ denotes the identity matrix of size $4$. Let
$$H = H_{2,2}/\{1,-1\} \subset \aut{\PP^{3}},$$ a group of order
16. Since every element of $H$ has order 2, it follows that
$$H \cong (\ZZ / 2\ZZ)^4.$$ In the sequel we will refer to $H$ as the \emph{Heisenberg group}. 

We consider $H$ as an $\mathbb{F}_2$-vector space $(\mathbb{F}_2)^4
\cong H$. It carries a symplectic form given by $\langle g,h
\rangle=0$ if $g,h \in H_{2,2}$ commute and $\langle g,h \rangle=1$ if
they anticommute.  For each element $g \in H_{2,2}$, $g \neq \pm 1$,
the fixed points of $g \in H$ form two skew lines in $\PP^{3}$. These
correspond to the eigenspaces of $g$ in $\CC^{4}$, with eigenvalues
$\pm 1$ or $\pm \sqrt{-1}$. In total we have 30 such lines in
$\PP^{3}$ which will be called the \emph{fix lines}.  If $h \in
H_{2,2}$ commutes with $g$, then $h$ leaves the fix lines belonging to
$g$ invariant and, if $h$ anticommutes with $g$, then it flips the fix
lines of $g$. Two fix lines belonging to $g,h \in H_{2,2}$ where $g
\neq \pm h$, intersect if and only if $g$ and $h$ commute.

Let $\mathfrak{S}_n$ denote the symmetric group on $n$ letters and let
$N$ be the normalizer of $H_{2,2}$ in $\SL{4}{\CC}$, $$N=\{n \in
\SL{4}{\CC}:nH_{2,2}n^{-1}=H_{2,2}\}.$$ Let $n \in N$. The reason for
introducing $N$ is that if a subset $X \subseteq \PP^{3}$ is invariant
under $H$ and $n \in N$, then $nX$ is also invariant. In fact, for any
$g \in H_{2,2}$ there is an $h \in H_{2,2}$ such that $gn=nh$, and
hence $g(nX)=n(hX)=nX$. We now proceed to give a useful relation
between $N$ and $\mathfrak{S}_6$. Since $N$ acts on $H_{2,2}$ by
conjugation, and acts as the identity on the center $\{1,-1\}$, $N$
acts on the vector space $H_{2,2} / \{1,-1\} \cong (\mathbb{F}_2)^4
$. This action is linear and the transformations preserve the
symplectic form. Thus we have a map $$\phi:N \rightarrow
\Sp{4}{\mathbb{F}_2}.$$ The kernel obviously contains $H_{2,2}$. In
fact it contains the group $\langle \pm i, H_{2,2} \rangle$ of order
64 which is generated by $H_{2,2}$ and multiplication by $\pm i$,
where $i=\sqrt{-1}$. By \cite{Di} Theorem 118, $\Sp{4}{\mathbb{F}_2}
\cong \mathfrak{S}_6$ and we get a sequence
$$1 \rightarrow \langle \pm i, H_{2,2} \rangle \rightarrow N
\rightarrow \mathfrak{S}_6 \rightarrow 1.$$ In \cite{N} it is shown
that this sequence is exact, and thus $N / \langle \pm i, H_{2,2}
\rangle \cong \mathfrak{S}_6$.

\subsection{The family of surfaces}
Consider the following elements of $\mathbb{C}[x,y,z,w]$: 
$$g_0 = x^4+y^4+z^4+w^4, \quad g_1 = 2(x^2y^2+z^2w^2), \quad g_2 = 2(x^2z^2+y^2w^2),$$ $$g_3 = 2(x^2w^2+y^2z^2), \quad  g_4 = 4xyzw.$$
For $\lambda=(A,B,C,D,E) \in \CC^{5}$ define
\begin{eqnarray} \label{eq:inv_pol}
F_{\lambda} = Ag_0+Bg_1+Cg_2+Dg_3+Eg_4.
\end{eqnarray}
Then $\{F_{\lambda}:\lambda \in \CC^{5}, \lambda \neq 0\} \subset \mathbb{C}[x,y,z,w]$
is the set of all homogeneous quartic polynomials in $\{x,y,z,w\}$
invariant under $H_{2,2}$. Now let $\mathcal{X} \rightarrow \PP^{4}$
be the corresponding family of Heisenberg invariant quartic surfaces:
$$\PP^{4} \times \PP^{3} \supset \mathcal{X} =
\{(\lambda,p):F_{\lambda}(p)=0\}.$$ There is a linear embedding of
$\PP^{4}$ into $\PP^{5}$ which better exposes the symmetry of the
situation. The normalizer $N$ of $H_{2,2}$ in $\SL{4}{\CC}$ acts on
the 5-dimensional vector space of $H_{2,2}$-invariant polynomials via
the natural action on $\CC^4$. Following \cite{BN,H,N} we use
six invariant quartic polynomials $t_0,\dots,t_5$, which satisfy the
relation $\sum_i t_i =0$, to embed the parameter space in $\PP^{5}$:
\begin{align*}
t_0 &= \frac{1}{3}g_0-g_1-g_2-g_3, & t_1 &= \frac{1}{3}g_0 -g_1+g_2+g_3, \\
t_2 &= \frac{1}{3}g_0+g_1-g_2+g_3, & t_3 &= \frac{1}{3}g_0+g_1+g_2-g_3, \\
t_4 &= -\frac{2}{3}g_0+2g_4, & t_5 &= -\frac{2}{3}g_0-2g_4.
\end{align*}
We use homogeneous coordinates $(u_0,u_1,u_2,u_3,u_4,u_5)$ on
$\PP^{5}$, expressing an invariant quartic polynomial as $\sum_i t_i
u_i$. The polynomials $t_0,t_1,t_2,t_3,t_4,t_5$ generate the space of
all $H_{2,2}$ invariant quartic polynomials. The action of $N/\langle
\pm i,H_{2,2} \rangle \cong \mathfrak{S}_6$ permutes these six
elements up to a sign change and the group acts on $\PP^5$ as the full
permutation group. Let $U \cong \PP^{4}$ be the linear subspace of
$\PP^{5}$ given by
$$U = \{u_0+u_1+u_2+u_3+u_4+u_5=0\} \subset \PP^{5}.$$ The parameter
space $\PP^{4}$ of $\mathcal{X}$ with coordinates $(A,B,C,D,E)$ is
identified with $U$ via the relations
\begin{align*}
A &=-u_4-u_5, \\
B &=-u_0-u_1+u_2+u_3, \\
C &=-u_0+u_1-u_2+u_3, \\
D &=-u_0+u_1+u_2-u_3, \\
E &=2u_4-2u_5.
\end{align*}
The transformation from $U$ to the parameters $(A,B,C,D,E)$ may be written as
\begin{align*}
u_0 &=A-B-C-D, \\
u_1 &=A-B+C+D, \\
u_2 &=A+B-C+D, \\
u_3 &=A+B+C-D, \\
u_4 &=-2A+E, \\
u_5 &=-2A-E.
\end{align*}
The point is that $\mathfrak{S}_6$ acts on $U$ by permuting the coordinates, and on the level of the quartic surfaces the action is by means of projective transformations on $\PP^{3}$. 

Let $\pi:\mathcal{X} \rightarrow U$ be the projection. For a point $u
\in U$ we have a corresponding surface $\mathcal{X}_u=\pi^{-1}(u)$. By
\emph{Heisenberg invariant quartic surface} (or simply \emph{invariant
  quartic}), we shall understand a fiber $\mathcal{X}_u$ for some $u
\in U$. In addition to these there are 15 pencils of quartic surfaces
in $\PP^{3}$ which are invariant under $H$, but whose defining
polynomials are not invariant under $H_{2,2}$ (these will, however, not
be considered).

\subsection{The Segre cubic, the Igusa quartic and the Nieto quintic}
In this section we introduce three hypersurfaces in $\PP^{4}$ relevant for the study of Heisenberg invariant quartics. If a group $G$ acts on a set $A$ we use $\orb{G}{a}$ to denote the orbit of a point $a \in A$. 

First consider the following sets of points, planes and 3-planes in the parameter space $U$:
\begin{align*}
q_0 &=(1,1,1,-1,-1,-1) \\ 
t_0 &= (1,-1,0,0,0,0) \\
p_0 &=\{u_0+u_1=u_2+u_3=u_4+u_5=0\} \\ 
e_0 &=\{u_0+u_1=u_0+u_1+u_2+u_3+u_4+u_5=0\} \\
\mathcal{Q} &= \orb{\mathfrak{S}_6}{q_0} \quad \textrm{(10 points)} \\
\mathcal{T} &= \orb{\mathfrak{S}_6}{t_0} \quad \textrm{(15 points)} \\
\mathcal{P} &= \orb{\mathfrak{S}_6}{p_0} \quad \textrm{(15 planes)} \\
\mathcal{E} &= \orb{\mathfrak{S}_6}{e_0} \quad \textrm{(15 3-planes)}.
\end{align*}
These points, planes and 3-planes relate to a
$\mathfrak{S}_6$-invariant cubic hypersurface $S_3 \subset U \cong
\PP^{4}$ known as the \emph{Segre cubic}. In $\PP^{5}$ it is defined
by the equations $$S_3 =
\{u_0^3+u_1^3+u_2^3+u_3^3+u_4^3+u_5^3=u_0+u_1+u_2+u_3+u_4+u_5=0\}.$$
The Segre cubic has ten nodes, namely the $ \mathfrak{S}_6$-orbit
$\mathcal{Q}$. It also contains the 15 points of $\mathcal{T}$, and
the set of 3-planes $\mathcal{E}$ is the set of tangent spaces to
$S_3$ at these points. For $t \in \mathcal{T}$ the intersection
between $S_3$ and the tangent space $T_t S_3$ is a union of three
2-planes. In total we have 15 such planes in $S_3$, called the
\emph{Segre planes}. This is the set $\mathcal{P}$. In \cite{N2},
Nieto follows Jessop \cite{J} in proving the following statement.
\begin{proposition} \label{prop:sing}
Let $u=(u_0,\dots,u_5) \in U$. The surface $\mathcal{X}_u$ is singular
if and only if $$(u_0^3+u_1^3+u_2^3+u_3^3+u_4^3+u_5^3) \prod_{i < j}
(u_i+u_j)=0.$$ In other words, the subset of $U$ parameterizing the
singular invariant quartics is the 3-fold $$S_3 \cup \bigcup_{t \in
\mathcal{T}} T_tS_3.$$
\end{proposition}
It is shown in \cite{H} and \cite{J} that a general point on the Segre
cubic corresponds to an invariant surface with exactly 16 nodes which
constitute an orbit under the Heisenberg group. Such a surface
acquires Kummer's $16_6$ configuration of nodes and tropes and is a
Kummer surface associated to some Abelian surface.
\begin{proposition} \label{prop:Kummer}
For generic $u \in S_3$, $\mathcal{X}_u$ is a Kummer surface. 
\end{proposition}
For $u \in U$, we say that $\mathcal{X}_u$ is a \emph{quadric of
  multiplicity two}, if its defining polynomial is a square of a
degree two polynomial. By \emph{invariant tetrahedron} we understand a
Heisenberg invariant quartic surface which is a union of four planes.
One checks by direct computation that there are exactly 10 quadrics of
multiplicity two and 15 invariant tetrahedra in the family. These
correspond to the $10$ nodes $\mathcal{Q}$ of $S_3$ and the 15 points
$\mathcal{T} \subset S_3$. The quadrics of multiplicity two are called
the \emph{fundamental quadrics}. There is an explanation of the
fundamental quadrics and invariant tetrahedra in terms of the
symplectic structure on $H \cong (\mathbb{F}_2)^4$. This vector space
has 35 planes, each consisting of 4 points. If $g,h \in H_{2,2}$ span
a plane in $H$, then that plane will be called isotropic if $g$ and
$h$ commute and non-isotropic if they anticommute. Of the 35 planes, 15
are isotropic and 20 are non-isotropic. For an isotropic plane
$\{1,g,h,gh\}$, the three pairs of fix lines belonging to $g,h$ and
$gh$ are the edges of an invariant tetrahedron. Similarly the
non-isotropic planes explain the fundamental quadrics. A plane is
isotropic if and only if it is equal to its orthogonal complement and
therefore the non-isotropic planes form 10 pairs of orthogonal planes.
For each such pair $P_1,P_2$, the 6 fix lines belonging to elements of
$P_1$ are skew and likewise for $P_2$. Moreover, the 6 fix lines
belonging to elements of $P_1$ intersect all the 6 fix lines belonging
to elements of $P_2$. All 12 lines lie on a fundamental quadric, with
6 lines in one ruling and 6 lines in the other.

The dual variety of $S_3$ is a quartic hypersurface in $\PP^{4}$
called the \emph{Igusa quartic}, which we denote by $I_4$. The
singular locus of $I_4$ consists of 15 lines. One way to view $I_4$ is
to look at the linear system of invariant quartics. Recall the basis
$\{g_0,\dots,g_4\}$ of this system introduced above. Because the
system does not have a base point we have a morphism $$\alpha:\PP^{3}
\rightarrow \PP^{4}:q \mapsto (g_0(q),\dots,g_4(q)).$$ The image of
$\alpha$ is $I_4$ \cite{Ba}. For a hyperplane $L \subset \PP^{4}$ we
have a corresponding Heisenberg invariant quartic surface
$X_L=\alpha^{-1}(L)$. In the following proposition we we will call a
surface $W$ a quotient by $H$ of $X_L$ if there is a morphism $X_L
\rightarrow W$ whose fibers are exactly the orbits of $H$.
\begin{proposition} \label{prop:Igusa}
For any hyperplane $L \subset \PP^{4}$, $L \cap I_4$ is a quotient of
$X_L=\alpha^{-1}(L)$ by the Heisenberg group with quotient map
$\alpha: X_L \rightarrow L \cap I_4$.
\end{proposition}

The major part of \propref{prop:Igusa} follows from the discussion in
\cite{Ba} Chapter VII page 209. More precisely it is shown there that
if $p=(x,y,z,w) \in \PP^3$ is such that $xyzw \neq 0$ then
$\alpha^{-1}(\alpha(p))$ has at most 16 elements. Thus, if in addition
we assume that $p$ is not on any fix line (that is the orbit of $p$
has 16 elements), we have that $\alpha^{-1}(\alpha(p))$ is exactly the
orbit of $p$ since clearly $\alpha$ is constant on the orbits. It is
straightforward to verify the remaining special cases where some
coordinate of $p$ is zero or $p$ is on a fix line.

A generic hyperplane $L \subset \PP^{4}$ intersects $I_4$ in a quartic
surface with 15 singular points and 10 tropes. The singular points
come from the intersections with the singular lines of $I_4$. On the
quotient $X_L/H$, there are 15 singularities that come from points
with nontrivial stabilizers in $H$. Namely, for any of the 15
non-identity elements of $H$, the two fix lines intersect $X_L$ in an 8
point orbit that maps to one singular point on $X_L/H$ by the quotient
map. Now, if $L$ is a generic tangent space to $I_4$, then $I_4 \cap
L$ has an additional singularity at the point of tangency and $I_4
\cap L$ is a Kummer surface. In this way $I_4$ is a compactification
of a moduli space of principally polarized Abelian surfaces with a
level-2 structure \cite{vdG,H}.

Let us now look at one more $ \mathfrak{S}_6$ invariant threefold in
the parameter space $U \cong \PP^{4}$, the \emph{Nieto quintic}
$N_5$. This is the Hessian variety to $S_3$, that is the zeros of the
determinant of the Hessian matrix of the polynomial defining
$S_3$. The singular locus of $N_5$ consists of 20 lines together with
the 10 nodes of $S_3$. With our choice of coordinates, viewing
$\PP^{4} \subset \PP^{5}$, $N_5$ is defined by the equations
$$\sum_{i=0}^5 u_i = \sum_{i=0}^5 \frac{1}{u_i}=0,$$ where
$\sum_{i=0}^5
\frac{1}{u_i}=u_1u_2u_3u_4u_5+u_0u_2u_3u_4u_5+\dots+u_0u_1u_2u_3u_4$. The
Nieto quintic and its connection to Heisenberg invariant quartics
which contain lines is studied in \cite{BN} where the following is
proved (\cite{BN} Sections 7 and 8):
\begin{proposition} \label{prop:nietoquintic}
The locus in $U$ parameterizing Heisenberg invariant quartics which
contain a line is equal to $N_5$ together with the 10 tangent cones to
the isolated singular points of $N_5$ (the 10 nodes of $S_3$).
\end{proposition}

\section{K3 surfaces, lattices and moduli} \label{sec:moduli}
This section consists of background material on lattices, K3 surfaces,
Picard groups and moduli spaces. In particular, we state some well
known results on moduli spaces of K3 surfaces and relations to Picard
groups, references on this topic include \cite{Do,DK,Hu,Huy,S,S2}.

A lattice $M$ is a finitely generated free Abelian group equipped with
an integral symmetric bilinear form. An important invariant of $M$ is
the discriminant, denoted $\discr{M}$, which is equal to the
determinant of any matrix representing the bilinear form. We shall
deal only with non-degenerate lattices, that is we assume that the
discriminant is non-zero. If the discriminant is equal to $\pm 1$, the
lattice is called unimodular. If $N \subseteq M$ is a sublattice of
full rank, then $\discr{N}/\discr{M}=[M:N]^2$, where $[M:N]$ is the
index of $N$ in $M$, see \cite{BHPV} I.2.1. If $N \subseteq M$ is a
sublattice, we use the notation $N^{\perp}$ for the orthogonal
complement of $N$ in $M$. A lattice is called even if the $\ZZ$ valued
quadratic form associated to the bilinear form takes on even values
only. There is an induced bilinear form on the real vector space $M
\otimes_{\ZZ} \mathbb{R}$ which may be diagonalized in such a way that
only $\pm 1$ appear on the diagonal. The signature of a lattice $M$ is
a pair of numbers $(a,b)$, where $a$ is the number of positive entries
in such a diagonal matrix and $b$ the number of negative entries. The
sum of $a$ and $b$ is the rank of $M$. A sublattice $N$ of a lattice
$M$ is called primitive if $M/N$ is torsion free. An isomorphism of
lattices is a group isomorphism that also respects the bilinear form.

For a scheme $X$, the Picard group is the group of isomorphism classes
of line bundles on $X$, and is denoted $\pic{X}$. If $X$ is a smooth
variety over an algebraically closed field, then $\pic{X}$ is
isomorphic to the group of Weil divisors on $X$ modulo linear
equivalence. For a smooth complex projective variety $X$, let
$\piczero{X}$ denote the subgroup of $\pic{X}$ corresponding to
divisors algebraically equivalent to 0 and define the N\'{e}ron-Severi
group by $\NS{X}=\pic{X}/\piczero{X}$. The N\'{e}ron-Severi group is a
finitely generated Abelian group and its rank is called the Picard
number of $X$. Now suppose that $X$ is a complex projective K3
surface, which means that $X$ is a smooth simply connected projective
complex variety of dimension 2, whose canonical bundle is trivial. In
this case $\piczero{X}=\{0\}$ and thus $\NS{X}=\pic{X}$ and $\pic{X}$
is free and finitely generated. It is well known that $1 \leq \rho (X)
\leq 20$, where $\rho{(X)}=\rank{\pic{X}}$ is the Picard number of
$X$. The second cohomology group $H^2(X,\ZZ)$ is also free and of rank
22, and the cup product defines a bilinear form giving it the
structure of a lattice. Abstractly, this lattice is isomorphic to the
so called K3 lattice $$L=U \oplus U \oplus U \oplus (-E_8)\oplus
(-E_8).$$ Here, $U$ is the hyperbolic plane, a rank 2 lattice with
bilinear form given by the matrix
\begin{displaymath}
\left(
\begin{array}{cc}
0 & 1 \\
1 & 0
\end{array}
\right),
\end{displaymath}
and $E_8$ is the lattice of rank 8 whose bilinear form is given by the
Cartan matrix of the root system $E_8$. The K3 lattice is even,
unimodular and of signature $(3,19)$. The Picard group of $X$ may be
viewed as a non-degenerate sublattice of $H^2(X,\ZZ)$ with signature
$(1,\rho{(X)}-1)$, where the bilinear form on $\pic{X}$ is the
intersection product. Since the lattice $L$ is even, so is
$\pic{X}$. The orthogonal complement of $\pic{X}$ in $H^2(X,\ZZ)$ is
called the transcendental lattice and is denoted by $T_X$.

We now turn to a particular class of K3 surfaces, namely smooth
quartic surfaces in $\PP^3$. A homogeneous quartic polynomial in 4
variables has 35 coefficients and therefore $\PP^{34}$ parameterizes
quartic surfaces in $\PP^3$. Consider the open set $O \subseteq
\PP^{34}$ that parameterizes smooth surfaces. The group $\PGL{4}{\CC}$
acts on $O$ by means of isomorphisms of $\PP^3$ and we will use
$\mathcal{M}_2$ to denote the GIT quotient with respect to this
action, see \cite{Huy} Chapter 5 Example 2.5. Thus $\mathcal{M}_2$ is
the (coarse) moduli space of smooth quartic surfaces in $\PP^3$
parameterizing such surfaces up to projective transformations. It is
an irreducible quasi projective variety of dimension 19.

A marking of a K3 surface $X$ is an isomorphism of lattices $L \cong
H^2(X,\ZZ)$. Below we will make use of the (fine) moduli space of
marked polarized K3 surfaces of degree 4. This is a complex manifold
which we will denote by $\mathcal{N}_2$, see \cite{Huy} Chapter 6
3.4. For $m \in \mathcal{N}_2$, let $\rho(m)$ denote the Picard number
of the surface corresponding to $m$ and consider the \emph{higher
  Noether-Lefschetz loci},
$$\NL{k}=\{m \in \mathcal{N}_2:\rho(m) \geq k\},$$ where $k$ is an
integer $1\leq k \leq 20$. The Noether-Lefschetz locus $\NL{k}$ may be
written as a countable union of analytic subvarieties of
$\mathcal{N}_2$ of dimension $20-k$, see \cite{Huy} Chapter 17
1.3.

\begin{definition}
We say that a condition holds for a \emph{very generic} point, or
\emph{very general} point, of a variety $Y$ if there is a countable
union $K=\bigcup_{i \in \ZZ} K_i$ of proper Zariski-closed subsets
$K_i \subset Y$, such that the condition holds for all points $y \in Y
\setminus K$.
\end{definition}

Let $O \subset \PP^{34}$ be the subset parameterizing smooth quartic
surfaces, let $Z \subseteq O$ be a subvariety and $\mathcal{Y}
\rightarrow Z$ the corresponding family of surfaces. We use
$\rho_0(Z)$ to denote the minimal Picard number of any fiber in the
family.

\begin{lemma} \label{lemma:dimension}
If $Z \subseteq O \subseteq \PP^{34}$ is a subvariety of $O$ such that
$Z \rightarrow \mathcal{M}_2$ is generically finite to one, then
$\dim{Z} \leq 20-\rho_0(Z)$.
\end{lemma}
\begin{proof}
  Let $k = \rho_0(Z)$. For a general smooth point $p \in Z$ there is
  an open analytic neighborhood $V \subseteq O$ of $p$ such that
  $V\cap Z \rightarrow \mathcal{M}_2$ is injective. By shrinking $V$
  we may assume that $B=V\cap Z$ is smooth, connected and simply
  connected. The family of surfaces with base $B$ may be marked by
  choosing a marking of some fiber, which yields a canonical marking
  of all fibers, see \cite{Huy} Chapter 6 2.2. This gives a
  holomorphic map $\phi: B \rightarrow \mathcal{N}_2$ which is also
  injective since if $\phi(b)=\phi(b')$ then the fibers over $b$ and
  $b'$ are projectively equivalent. Since $\phi(B) \subseteq \NL{k}$
  and $\NL{k}$ is a countable union of $(20-k)$-dimensional analytic
  subvarieties of $\mathcal{N}_2$, $\dim{Z}=\dim{B} \leq 20-k$.
  \end{proof}

\section{The Picard number} \label{sec:picard}
We will show that the Picard number of a very general Heisenberg
invariant quartic is 16. A corollary to this is that any invariant
divisor class is a multiple of the hyperplane section. We start with
some generalities on symplectic group actions on K3 surfaces.

If a group $G$ acts on a K3 surface $X$, then $G$ acts on $H^2(X,\ZZ)$
and the action restricts to the sublattices $\pic{X}$ and $T_X$. We
will write this action $g^{\ast}l$ for $g \in G$ and $l \in
H^2(X,\ZZ)$. Let $$H^2(X,\ZZ)^G = \{l \in H^2(X,\ZZ): g^{\ast}l=l, \;
\textrm{for all} \; g \in G\},$$ and $\pic{X}^G=H^2(X,\ZZ)^G \cap
\pic{X}$. Since the canonical bundle of a K3 surface $X$ is trivial,
$X$ has a unique non-zero holomorphic 2-form $\omega$, up to
rescaling. Hence for $f\in \aut{X}$, $f^* \omega=\lambda \omega$,
where $\lambda \in \CC$. If $\lambda=1$, then $f$ is called
symplectic. In the following proposition, the first claim is due to
Nikulin \cite{Nik} and the second claim is due to Mukai \cite{M}.

\begin{proposition} \label{prop:NikulinMukai}
Let $G$ be a finite group of symplectic automorphisms of a K3 surface
$X$ and let $\Omega_G=(H^2(X,\ZZ)^G)^{\perp}$ be the orthogonal
complement of $H^2(X,\ZZ)^G$ in $H^2(X,\ZZ)$. For $g \in
G$, $g \neq 1$, let $f(g)$ be the number of fixed points of $g \in G$
and put $f(1)=24$. Then
\begin{enumerate}
\item $T_X \subseteq H^2(X,\ZZ)^G$ and $\Omega_G \subseteq
\pic{X}$,
\item $\rank{H^2(X,\ZZ)^G}+2=\frac{1}{|G|}\sum_{g \in G}f(g)$.
\end{enumerate}
\end{proposition}

The generic member of our family is a smooth quartic in $\PP^{3}$ and
hence a K3 surface. Since the action of the Heisenberg group is
induced by a representation $H_{2,2} \rightarrow \SL{4}{\CC}$ and the
action of $H_{2,2}$ preserves the defining polynomials of our
surfaces, the action of $H$ is symplectic by \cite{M} Lemma 2.1.

\begin{corollary} \label{coro:ranks}
If $X \subset \PP^{3}$ is a generic Heisenberg invariant quartic, then
\begin{enumerate}
\item $\rank{H^2(X,\ZZ)^H}=7$,
\item $16 \leq \rho (X)$.
\end{enumerate}
\end{corollary}
\begin{proof}
Let $g \in H$, $g\neq 1$. The fixed points of $g$ form two skew lines in
$\PP^{3}$, and hence $g$ has 8 fixed points on $X$.  By
\propref{prop:NikulinMukai}, $T_X \subseteq H^2(X,\ZZ)^H$ and
$\rank{H^2(X,\ZZ)^H}=7$. Assume that $\rank{T_X}=7$. The
hyperplane class $h \in \pic{X}$ is invariant and thus there is an
integer $n>0$ such that $nh \in T_X$. Since $T_X$ is the orthogonal
complement of $\pic{X}$, $0=(nh)h=4n$, a contradiction. Hence
$\rank{T_X} \leq 6$. It follows that $16 \leq \rank{\pic{X}}$, since
$\rank{\pic{X}}+\rank{T_X}=\rank{H^2(X,\ZZ)}=22$.
\end{proof}

For homogeneous $F \in \CC [x,y,z,w]$, let $X=\{F=0\} \subset \PP^3$
and consider the Hessian surface of $X$,
$$\textrm{Hess}(X)=\{x \in \PP^{3}: \det{(\textrm{Hess}(F)}(x))=0\},$$
where $\textrm{Hess}(F)$ is the Hessian matrix of second derivatives
of $F$. Let $T \in \GL{4}{\CC}$ and denote the induced automorphism of
$\PP^{3}$ also by $T$. Let $Y=T(X)$ and let $(T^{-1})^t$ denote the
transpose of $T^{-1}$. Note that for $p \in \CC^{4}$,
$$\textrm{Hess}(F \circ T^{-1})(p)=(T^{-1})^t \cdot
\textrm{Hess}(F)(T^{-1}(p)) \cdot T^{-1}.$$ It follows that $T$ takes
the Hessian surface of $X$ to the Hessian surface of $Y$, with
preserved multiplicities of irreducible components. In particular, the
Hessian surface of a Heisenberg invariant quartic is a Heisenberg
invariant octic.

\begin{example} \label{ex:fermat}
Let $u=(1,1,1,1,-2,-2) \in U$. We will look at linear transformations
of the Fermat quartic $X = \mathcal{X}_u=\{(x,y,z,w) \in
\PP^{3}:x^4+y^4+z^4+w^4=0\}$ for the purpose of applying
\lemmaref{lemma:dimension}.

There are 15 surfaces in the family that are projectively equivalent
to $X$ corresponding to the $\mathfrak{S}_6$-orbit of
$(1,1,1,1,-2,-2)$, and we shall see that there are no others. The
Hessian surface of the Fermat quartic is the zeros of $x^2y^2z^2w^2$,
an invariant tetrahedron of double multiplicity. Note that the
invariant tetrahedron defined by $xyzw$ corresponds to the point
$(0,0,0,0,1,-1) \in U$. Suppose that $T \in \GL{4}{\CC}$ takes $X$ to
some Heisenberg invariant quartic $Y$. By the discussion above, the
Hessian surface of $Y$ must be one of the 15 invariant tetrahedra,
counted with multiplicity 2. These correspond to the
$\mathfrak{S}_6$-orbit of $(0,0,0,0,1,-1)$. Let $S \in \SL{4}{\CC}$ be
a transformation in the normalizer of $H_{2,2}$ in $\SL{4}{\CC}$ which
takes $\textrm{Hess}(Y)$ to $\textrm{Hess}(X)$. Then $S \circ T:\PP^3
\rightarrow \PP^3$ induces an automorphism of $\textrm{Hess}(X)$ and
therefore corresponds to a permutation of the coordinates $(x,y,z,w)$
possibly combined with a rescaling of the axes. Since $S(T(X))$ is an
invariant quartic, $S(T(X))=X$ and $T(X)=S^{-1}(X)$. Therefore $T(X)$
is in the $\mathfrak{S}_6$-orbit of $(1,1,1,1,-2,-2)$.
\end{example}

\begin{theorem} \label{thm:picardnumber}
A very generic Heisenberg invariant quartic $X \subset \PP^{3}$ has Picard number 16.
\end{theorem}
\begin{proof}
  By the second claim of \cororef{coro:ranks}, $\rho (X) \geq 16$. Let
  $Z \subset \PP^{34}$ be the subset parameterizing smooth Heisenberg
  invariant quartics. The quotient map $Z \rightarrow \mathcal{M}_2$
  is generically finite to one by upper-semi-continuity of the
  dimension of fibers and \exref{ex:fermat}. By
  \lemmaref{lemma:picinjections}, $\rho (X)=\rho_0(Z)$ and using
  \lemmaref{lemma:dimension} we get $\rho (X) \leq 20-4=16$.
\end{proof}

\begin{corollary} \label{coro:picinv}
For very generic $u \in U$ and $X=\mathcal{X}_u$, $\pic{X}^H=\ZZ h$,
where $h \in \pic{X}$ is the class of the hyperplane section. In
particular, every invariant curve on $X$ is a complete intersection
between $X$ and another surface in $\PP^{3}$.
\end{corollary}
\begin{proof}
Since
$\rank{\pic{X}}=16$, $$\rank{T_X}=\rank{H^2(X,\ZZ)}-\rank{\pic{X}}=6.$$
Because $T_X \cap \pic{X}=\{0\}$, and $T_X$ is contained in the rank 7
lattice $H^2(X,\ZZ)^H$, it follows that $\pic{X}^H$ has rank at most
1. Clearly, the class $h$ is invariant under $H$ and thus
$\rank{\pic{X}^H}=1$. Let $D$ be a generator of $\pic{X}^H$ and let $n
\in \ZZ$ be such that $h=nD$. Since $4=h^2=n^2D^2$, and $D^2$ is even
since $\pic{X}$ is an even lattice, it follows that $n=\pm 1$.
\end{proof}

\section{Conics on the invariant surfaces} \label{sec:conics}

\begin{definition} \label{def:tropes}
Let $X$ be a quartic surface in $\PP^{3}$. We say that a plane $L$ in
$\PP^{3}$ is a \emph{trope} of $X$ if $X \cap L$ is an irreducible
conic counted with multiplicity two.
\end{definition}
\begin{lemma} \label{lemma:tropesing} 
A quartic surface $X \subset \PP^{3}$ which has a trope $L$ is
necessarily singular.
\end{lemma}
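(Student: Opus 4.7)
The plan is to choose coordinates adapted to the trope $L$, write the defining polynomial of $X$ in a convenient normal form, and then exhibit a singular point of $X$ on $C$ by an application of Bezout's theorem on $L$.

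First, I would choose homogeneous coordinates on $\PP^{3}$ so that $L = \{w=0\}$, and let $F \in \CC[x,y,z,w]$ be the defining quartic polynomial of $X$. The hypothesis that $X \cap L$ is an irreducible conic $C$ counted with multiplicity two means precisely that $F(x,y,z,0) = q(x,y,z)^{2}$ for some irreducible quadratic form $q$ in the variables $x,y,z$ (after rescaling $F$), with $C = \{q = w = 0\}$. I would then write
\begin{equation*}
F = q^{2} + wG
\end{equation*}
for some homogeneous cubic form $G \in \CC[x,y,z,w]$, which makes the rest of the computation transparent.

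Next, I would compute the partial derivatives of $F$ and restrict to $L$. On $L$ the first three partial derivatives are $2q \cdot \partial q/\partial x$, $2q \cdot \partial q/\partial y$, and $2q \cdot \partial q/\partial z$, all of which vanish identically along $C$ (because $q$ does). Meanwhile $\partial F/\partial w$ restricted to $L$ equals $G(x,y,z,0)$. Consequently, any point of $C$ at which the plane cubic $G|_{L}$ vanishes is a point where all four partials of $F$ vanish simultaneously, hence a singular point of $X$.

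Finally, I would invoke Bezout's theorem in the plane $L \cong \PP^{2}$: the smooth conic $C$ and the cubic curve $\{G|_{L}=0\}$ must meet in at least one point (in fact in $6$ points counted with multiplicity), producing the required singular point of $X$. The only issue that needs a separate word is the degenerate case in which $G|_{L} \equiv 0$, where Bezout does not literally apply; but in that case the argument of the previous paragraph shows that every point of $C$ is singular on $X$, so the conclusion only strengthens. This is really the main place one must be careful, and it presents no genuine obstacle.
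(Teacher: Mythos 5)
Your proof is correct and follows essentially the same route as the paper: both write the quartic in the normal form $(\text{linear form})\cdot(\text{cubic}) + (\text{quadric})^2$ adapted to the trope and locate a singular point on the intersection of the double conic with the residual cubic, which is nonempty by Bezout (or the projective dimension theorem). Your explicit treatment of the degenerate case $G|_{L}\equiv 0$ is a minor additional care the paper leaves implicit.
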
 
\begin{proof} 
Let $(x,y,z,w)$ be coordinates on $\PP^{3}$. Change coordinates so
that $L=\{x=0\}$. Then $X$ is defined by $xF(x,y,z,w)+(G(y,z,w))^2=0$,
for some cubic polynomial $F$ and quadratic polynomial $G$. Then the
set $M = \{x=0\} \cap \{F=0\} \cap \{G=0\}$ is nonempty and inside the
singular locus of $X$.
\end{proof}

Below we shall make use of the following fact: for generic $u \in S_3$,
the Kummer surface $\mathcal{X}_u$ is uniquely determined by any of
its nodes as well as any of its tropes. To establish this, let
$F_{\lambda}(x,y,z,w)$ be a Heisenberg invariant polynomial depending
on the parameters $\lambda=(A,B,C,D,E)$ as in (\ref{eq:inv_pol}). For
a general $p \in \PP^{3}$ there is a unique $u \in U$ such that
$\mathcal{X}_u$ is singular at $p$. To see this, it is enough to check
that there is a point $p=(x,y,z,w) \in \PP^3$ such that the system of
linear equations
$$\frac{\partial F_{\lambda}}{\partial x}(p)=\frac{\partial
F_{\lambda}}{\partial y}(p)=\frac{\partial F_{\lambda}}{\partial
z}(p)=\frac{\partial F_{\lambda}}{\partial w}(p)=0$$ has a unique
solution $(A,B,C,D,E) \in \PP^4$ (this is true for example if
$p=(1,2,3,4)$). In fact, there exists a unique invariant quartic
singular at $p \in \PP^3$ exactly when $p$ is not on a fix line
\cite{BN}. Using $\mathfrak{S}_6$-invariance one checks that the set
of $u \in U$ such that $\mathcal{X}_u$ has a singular point that lies
on some fix line is equal to the union of the $15$ tangent spaces $T_t
S_3$ where $\mathcal{X}_t$ is an invariant tetrahedron. We conclude
that a generic Heisenberg invariant Kummer surface is determined by
any of its nodes. The corresponding statement for tropes is clear once
we see that $p \in \PP^{3}$ is a node precisely when the plane
$p^{\ast} \in {\PP^{3}}^{\ast}$ with the same coordinates as $p$ is a
trope. Let $p$ be a node, and note that we may assume $p$ to be a
generic point in $\PP^3$. As in \cite{H} Chapter I $\S 3$, there are 6
points in the orbit of $p$ that lie in $p^{\ast}$. It follows that the
6 points are multiple points of the curve of intersection between
$p^{\ast}$ and $\mathcal{X}_u$. This curve must then either contain a
line as an irreducible component or be a conic of multiplicity
two. The first case is excluded by \propref{prop:nietoquintic}. The
set of 16 tropes is hence the orbit of $p^{\ast}$.

\begin{theorem} \label{thm:conics}
A generic invariant quartic $\mathcal{X}_u$ contains at least 320 smooth conics.
\end{theorem}
\begin{proof}
Pick $u \in U$ generic and let $q$ be a node of the Segre cubic
$S_3$. The line through $u$ and $q$ intersects $S_3$ in one additional
point $p$, let $K(q) = \mathcal{X}_p$. Since $u$ is generic, $K(q)$ is
a Kummer surface by \propref{prop:Kummer}, and thus has 16 tropes
which form an orbit under $H$. Let $T$ be a trope of $K(q)$ and note
that $\mathcal{X}_q$ is a quadric of multiplicity two. The polynomial
defining $\mathcal{X}_u \cap T$, in homogeneous coordinates on $T$, is
a linear combination of squares, and thus reducible. The generic
member of the family does not contain any line
(\propref{prop:nietoquintic}), and since $\mathcal{X}_u$ is smooth nor
does it have a trope (\lemmaref{lemma:tropesing}). We conclude that
$\mathcal{X}_u \cap T$ is a union of two smooth conics.

For two different nodes $q_1$ and $q_2$ of $S_3$ the corresponding
Kummer surfaces $K(q_1)$ and $K(q_2)$ are different, since we may
assume that $u$ is not on the line through $q_1$ and $q_2$. Since a
generic Heisenberg invariant Kummer surface is determined by any of
its tropes, all tropes of the Kummer surfaces $K(q)$, for all of the
10 nodes $q$ of $S_3$, are different. Since two different planes
cannot have a smooth conic in common, we conclude that there are at
least $2 \cdot 16 \cdot 10=320$ smooth conics on $\mathcal{X}_u$.
\end{proof}

For a general invariant quartic $X$, we will refer to the conics in
\thmref{thm:conics} as \emph{the 320 conics} on $X$. As we show in
\propref{prop:configuration}, these conics are the only conics on a
very general surface in the family.

\section{Invariant surfaces containing lines} \label{sec:lines}
The generic Heisenberg invariant quartic does not contain any
line. The invariant surfaces that contain a line are parameterized by
the Nieto quintic $N_5$ and the ten tangent cones to the isolated
singularities of $N_5$ \cite{BN}. In this section we will look at the
configuration of lines on a Heisenberg invariant quartic that
corresponds to a general point on $N_5$. This will be used to compute
the intersection matrix of the 320 smooth conics on a very generic
invariant quartic. General references for this section are
\cite{BB,BN,B} and in particular the proof of
\propref{prop:nootherconics} follows arguments made in \cite{BB} and
\cite{B}.

Let $(p_{01},p_{02},p_{03},p_{12},p_{13},p_{23})$ be the Pl\"ucker
coordinates on $\PP^5$. The Pl\"ucker embedding identifies the
Grassmannian of lines in $\PP^3$ with the quadric in $\PP^5$ defined
by the Pl\"ucker relation
$$p_{01}p_{23}-p_{02}p_{13}+p_{03}p_{12}=0.$$ We will use so-called
Klein coordinates $(x_0,\dots,x_5)$, which are defined in terms of
Pl\"ucker coordinates by
\begin{center}
\begin{tabular}{lll}
$x_0=p_{01}-p_{23}$, & $x_2=p_{02}+p_{13}$, & $x_4=p_{03}-p_{12}$, \\
$x_1=i(p_{01}+p_{23})$, & $x_3=i(p_{02}-p_{13})$, & $x_5=i(p_{03}+p_{12})$,
\end{tabular}
\end{center}
where $i^2=-1$. In Klein coordinates the Pl\"ucker relation reads
$$x_0^2+x_1^2+x_2^2+x_3^2+x_4^2+x_5^2=0,$$ and the condition that a
line with coordinates $(x_0,\dots,x_5)$ is coplanar with a line with
coordinates $(y_0,\dots,y_5)$ is
\begin{equation} \label{eq:coplanar}
x_0y_0+x_1y_1+x_2y_2+x_3y_3+x_4y_4+x_5y_5=0.
\end{equation}
The Heisenberg group acts on the space of lines in $\PP^3$ and in
Klein coordinates the action is given neatly by sign changes in
accordance with the table
\begin{center}
\begin{tabular}{cccccccc}
 &\vline& $x_0$ & $x_1$ & $x_2$ & $x_3$ & $x_4$ & $x_5$ \\
\hline 
$\sigma_1$ &\vline& $-$ & $+$ & $-$ & $-$ & $+$ & $-$ \\
$\sigma_2$ &\vline& $-$ & $-$ & $+$ & $-$ & $-$ & $+$ \\
$\tau_1$ &\vline& $+$ & $+$ & $-$ & $-$ & $-$ & $-$ \\
$\tau_2$ &\vline& $-$ & $-$ & $+$ & $+$ & $-$ & $-$
\end{tabular}
\end{center}
where $\sigma_1,\sigma_2,\tau_1,\tau_2$ are the generators from
\secref{sec:Hgroup}.

In \cite{BN} it is shown that for a generic $u \in N_5$,
$\mathcal{X}_u$ contains exactly 32 lines. In the same paper it is
proved that such a surface is a desingularized Kummer surface coming
from an Abelian surface $A$ with a $(1,3)$-polarization. The
Klein coordinates for any of the 32 lines satisfy the condition
$$\frac{1}{x_0^2}+\frac{1}{x_1^2}+\frac{1}{x_2^2}+\frac{1}{x_3^2}+\frac{1}{x_4^2}+\frac{1}{x_5^2}=0,$$
by which we mean that
$$x_1^2x_2^2x_3^2x_4^2x_5^2+x_0^2x_2^2x_3^2x_4^2x_5^2+ \dots
+x_0^2x_1^2x_2^2x_3^2x_4^2=0.$$ The 32 lines constitute two orbits
under the Heisenberg group, each containing 16 lines. There is an
involution relating the two orbits: if $(x_0,\dots,x_5)$ are Klein
coordinates for a line in one of the orbits, then
$$(-\frac{1}{x_0},\frac{1}{x_1},\frac{1}{x_2},\frac{1}{x_3},\frac{1}{x_4},\frac{1}{x_5})$$
are Klein coordinates for a line in the other orbit. The sixteen lines
in any of the two orbits are mutually disjoint, in fact one of the
orbits is the union of the exceptional divisors coming from blowing up
the singular Kummer surface in its 16 nodes. The configuration of
lines is called a $32_{10}$ since any line in one orbit intersects
exactly 10 lines in the other orbit. There are thus exactly 160
reducible conics on the surface. If two lines, one from each orbit,
are coplanar then their \emph{complement} on the surface is an
irreducible conic. In other words, the plane that contains the
reducible conic intersects the surface in the union of two lines and
an irreducible conic. This gives rise to 160 irreducible conics on
$\mathcal{X}_u$.
\begin{proposition} \label{prop:nootherconics}
For a very general $u \in N_5$ there are no other irreducible conics
on $\mathcal{X}_u$ except for the 160 complements to reducible conics.
\end{proposition}
\begin{proof}
Let $X=\mathcal{X}_u$. The surface $X$ is a desingularized Kummer
surface of an Abelian surface $A$ admitting a $(1,3)$-polarization. In
fact, there is an open subset of the moduli space of Abelian surfaces
which admit a $(1,3)$-polarization together with the choice of a
level-2 structure which is naturally an unbranched double cover of the
open subset of $N_5$ parameterizing smooth surfaces \cite{BN}. Since
$u \in N_5$ is very general we may assume that $A$ has Picard number 1
(see \cite{BL} Section 15.1). Let $e_1,\dots,e_{16}$ denote the
halfperiods of $A$, that is $e_i$ is a 2-torsion point or the identity
element. Let $\tilde{A}$ denote the blow-up of $A$ in
$\{e_1,\dots,e_{16}\}$. We have morphisms $$A \xleftarrow{\alpha}
\tilde{A} \xrightarrow{\gamma} X,$$ where $\gamma$ is a double cover
branched over the 16 lines $L_1,\dots,L_{16}$ in one of the orbits and
$\alpha$ is the blow-up map \cite{BN}. Let $E_i = \alpha^{-1}(e_i)$,
$i=1,\dots,16$. Then $E_i \cdot E_j=0$ if $i \neq j$, $E_i^2=-1$ and
up to reordering $\gamma (E_i)=L_i$. Now, as is explained in
\cite{BN}, if $M$ is the line bundle corresponding to the sheaf
$\mathcal{O}_X(1)$, then the line bundle $\gamma^*(M) \otimes
\mathcal{O}_{\tilde{A}}(\sum_{i=1}^{16} E_i)$ on $\tilde{A}$ descends
to a line bundle on $A$ which defines a polarization of type
$(2,6)$. Further, there is a symmetric line bundle $\theta$ on $A$ of
type $(1,3)$ such that $\alpha^*(\theta \otimes \theta)=\gamma^*(M)
\otimes \mathcal{O}_{\tilde{A}}(\sum_{i=1}^{16} E_i)$, see
\cite{BN}. Let $T$ denote the divisor class in the N\'{e}ron-Severi
group of $A$ that corresponds to $\theta$ and let $H_X$ be the
hyperplane class of $X$. Then we have that $T^2=2 d_1 d_2$ where
$(d_1,d_2)=(1,3)$ \cite{Mum}. Thus $T^2=6$.

Now let $C \subseteq X$ be an irreducible conic different from the
complements of the reducible conics. By the adjunction formula, we
have that $C^2=-2$. Let $F=\alpha_*(\gamma^*(C))$. Since $T^2=6$ is
square free, $T$ generates the N\'{e}ron-Severi group of $A$ and we
may write $F=dT$ for some integer $d$. Now let $m_i=C \cdot
L_i=\gamma^*(C) \cdot E_i$ for $i=1,\dots,16$. Then
$\alpha^*(F)=\gamma^*(C)+\sum_{i=1}^{16}m_iE_i$. Hence
$6d^2=(dT)^2=F^2=\alpha^*(F)^2=(\gamma^*(C)+\sum_{i=1}^{16}m_iE_i)^2$,
and therefore $$6d^2= \gamma^*(C)^2+2\sum_{i=1}^{16}m_i\gamma^*(C)
\cdot E_i+\sum_{i=1}^{16}m_i^2E_i^2=
2C^2+2\sum_{i=1}^{16}m_i^2-\sum_{i=1}^{16}m_i^2.$$  In conclusion,
\begin{equation} \label{eq:F_squared}
6d^2=\sum_{i=1}^{16}m_i^2-4.
\end{equation}
Furthermore, $$12d=2T \cdot dT=2T \cdot F=\alpha^*(2T)\cdot
\alpha^*(F)=(\gamma^*(H_X)+\sum_{i=1}^{16}E_i)\cdot
(\gamma^*(C)+\sum_{i=1}^{16}m_iE_i),$$ and hence
$$12d=\gamma^*(H_X)\cdot \gamma^*(C)+\sum_{i=1}^{16}m_iE_i\cdot
\gamma^*(H_X)+\sum_{i=1}^{16}E_i \cdot
\gamma^*(C)+\sum_{i=1}^{16}m_iE_i^2,$$
which implies that
\begin{equation} \label{eq:2TF}
12d=4+\sum_{i=1}^{16}m_i+\sum_{i=1}^{16}m_i-\sum_{i=1}^{16}m_i=4+\sum_{i=1}^{16}m_i.
\end{equation}
Since $L_i$ and $C$ are not coplanar, $m_i=0$ or $m_i=1$ for all
$i$. It follows by (\ref{eq:2TF}) that $d=1$. Note also that
$m_i=m_i^2$ for all $i$. But then (\ref{eq:F_squared}) gives
$\sum_{i=1}^{16}m_i=10$ and (\ref{eq:2TF}) gives
$\sum_{i=1}^{16}m_i=8$, a contradiction.
\end{proof}

Let $u \in N_5$ be very generic and put $X=\mathcal{X}_u$. As long as
all the Klein coordinates $(x_0,\dots,x_5)$ of a line $l$ are
non-zero, the matter of whether some Heisenberg translate of $l$
intersects some Heisenberg translate of the image of $l$ under the
involution does not depend on $l$. From the description above of the
action of the Heisenberg group on the Grassmannian of lines in
$\PP^3$, the condition (\ref{eq:coplanar}), and the involution
relating the two sets of 16 lines on $X$, it is straightforward to
compute the intersection matrix of the 160 reducible conics on
$X$. Since the 160 irreducible conics on $X$ are coplanar with
reducible conics, the intersection matrix of all 320 conics on $X$ is
readily deducible from the intersection matrix of the 160 reducible
conics. We will not do this in detail but we note in passing that if
$C$ and $D$ are conics on $X$ such that $C$ and $D$ belong to
different orbits under the Heisenberg group $H$, then
\begin{itemize}
\item[] $C \cdot gC=0$ for 6 different $g \in H$, $g\neq 1$,
\item[] $C \cdot gC=2$ for 9 different $g \in H$, $g\neq 1$,
\item[] $C \cdot gD=0$ for 4 different $g \in H$,
\item[] $C \cdot gD=1$ for 8 different $g \in H$,
\item[] $C \cdot gD=2$ for 4 different $g \in H$.
\end{itemize}
We will see in \secref{sec:NS} that the intersection matrix of the
conics on $X$ is the same as the intersection matrix associated to the
320 smooth conics on a very generic Heisenberg invariant quartic.

\begin{remark} \label{rem:16conics}
For later purposes we will consider the intersections of a special set
of 16 reducible conics on $X$. We need to put an order on the set of
all reducible conics on $X$. We will order the elements of the
Heisenberg group as follows. Let $g \in H$, $g=\sigma_1^i \sigma_2^j
\tau_1^k \tau_2^l$ where $0 \leq i,j,k,l \leq 1$ and
$\sigma_1,\sigma_2,\tau_1$ and $\tau_2$ are the generators from
\secref{sec:Hgroup}. Then we order $H$ by interpreting $(i,j,k,l)$ as
a number in binary form that marks the place of $g$. Let $L$ be a line
on $X$. The order on $H$ puts an order on the orbit of $L$ as well as
the orbit of the image of $L$ under the involution. This induces an
order on the set of reducible conics by saying that if line number $a$
in the orbit of $L$ intersects line number $b$ in the other orbit, and
likewise for $a'$ and $b'$ with $(a,b) \neq (a',b')$, then the
reducible conic indexed by $(a,b)$ is prior to the conic indexed by
$(a',b')$ if and only if $a<a'$, or $a=a'$ and $b < b'$. With that
order of rows and columns, let $N$ denote the intersection matrix of
the reducible conics on $X$. Let $M$ be the submatrix of $N$ given by
picking out the following rows and columns:
$$(4,7,21,27,36,50,75,81,88,110,114,128,131,138,141,154).$$
This particular choice of conics has been made because, as we shall
see in \secref{sec:picardgroup}, it defines a sublattice with minimal
discriminant (the conics were found using a computer aided search). Then
\begin{displaymath}
M=\left(
\begin{smallmatrix}
-2&0&2&1&2&2&1&0&0&2&0&1&1&2&1&1\\
0&-2&1&0&2&1&2&0&0&1&2&2&1&2&1&2\\
2&1&-2&0&1&1&2&0&1&1&1&0&0&0&1&2\\
1&0&0&-2&1&0&2&1&2&1&1&1&1&1&1&2\\
2&2&1&1&-2&1&1&2&2&0&1&2&1&1&1&0\\
2&1&1&0&1&-2&1&2&1&0&2&2&1&1&2&1\\
1&2&2&2&1&1&-2&1&0&2&1&1&1&0&1&1\\
0&0&0&1&2&2&1&-2&0&1&1&1&0&1&0&2\\
0&0&1&2&2&1&0&0&-2&2&1&2&0&1&2&1\\
2&1&1&1&0&0&2&1&2&-2&2&2&1&1&1&0\\
0&2&1&1&1&2&1&1&1&2&-2&0&1&1&2&0\\
1&2&0&1&2&2&1&1&2&2&0&-2&2&0&1&2\\
1&1&0&1&1&1&1&0&0&1&1&2&-2&0&0&0\\
2&2&0&1&1&1&0&1&1&1&1&0&0&-2&1&1\\
1&1&1&1&1&2&1&0&2&1&2&1&0&1&-2&1\\
1&2&2&2&0&1&1&2&1&0&0&2&0&1&1&-2
\end{smallmatrix}
\right),
\end{displaymath}
and $\textrm{det}{(M)}=-512$.
\end{remark}

\section{The intersection matrix of the 320 conics} \label{sec:NS} 
In \secref{sec:lines} we determined the configuration of conics
(reducible or irreducible) on an invariant quartic $\mathcal{X}_u$ for
very generic $u \in N_5$. In this section we shall show that this
configuration is the same as the configuration of the 320 irreducible
conics on a very general invariant quartic. That is, the intersection
matrices of the two collections of curves are the same up to
reordering of the curves. The 320 smooth conics vary in a family of
curves whose configuration we can compute in a special case, namely
the case of a surface that contains lines.

In the process we shall also show that there are no more conics on a
very general Heisenberg invariant quartic than the 320 conics of
\thmref{thm:conics}.

\begin{lemma} \label{lemma:picinjections}
Let $O \subset \PP^{34}$ be the open subset parameterizing smooth
quartic surfaces, let $Z \subseteq O$ be a smooth subvariety and
$\mathcal{Y} \rightarrow Z$ the corresponding family of surfaces. For
a very general point $z \in Z$ and any $z' \in Z$ there is an
injection of lattices $$i:\pic{\mathcal{Y}_z} \hookrightarrow
\pic{\mathcal{Y}_{z'}}$$ which respects hyperplane classes.
\end{lemma}
\begin{proof}
The N\'{e}ron-Severi group of the geometric generic fiber of
$\mathcal{Y}$ injects into $\pic{\mathcal{Y}_z}$ for any $z \in Z$
(see \cite{MP} Proposition 3.6 (a)). Moreover, for very general $z \in
Z$, that injection is an isomorphism (see \cite{MP} Proposition 3.6
(a) and Corollary 3.9).  This induces an injection
$i:\pic{\mathcal{Y}_z} \rightarrow \pic{\mathcal{Y}_{z'}}$, for a very
general $z \in Z$ and any $z' \in Z$. This injection respects
intersection numbers (see \cite{Huy} Proposition 2.10 Chapter
17). Also, the following diagram commutes
$$\xymatrix{\pic{\mathcal{Y}} \ar[rd]_{r_2} \ar[rr]^{r_1} & &
  \pic{\mathcal{Y}_{z'}} \\ & \pic{\mathcal{Y}_z} \ar[ru]_{i} &},$$
where $r_1$ is the restriction map $j^*$ with $j: \mathcal{Y}_{z'}
\hookrightarrow \mathcal{Y}$ the inclusion and similarly for $r_2$
(this follows from the construction in \cite{SGA} X 7.8). Consider the
polarizations on $\mathcal{Y}_z$ and $\mathcal{Y}_{z'}$ that define
the embeddings as quartic surfaces in $\PP^3$. For a hyperplane $L
\subset \PP^3$, these polarizations both come from the divisor class
$(\PP^{34} \times L) \cap \mathcal{Y}$ on $\mathcal{Y}$ via the
restriction maps. Hence, $i$ maps the hyperplane class of
$\mathcal{Y}_z$ to the hyperplane class of $\mathcal{Y}_{z'}$.
\end{proof}

\begin{proposition} \label{prop:configuration}
For very general points $u \in U$ and $u' \in N_5$ we have that
\begin{enumerate}
\item the quartic $\mathcal{X}_u$ has exactly 320 conics and no lines,
\item the configuration of irreducible conics on $\mathcal{X}_{u}$ is
  the same as the configuration of conics (reducible or irreducible)
  on $\mathcal{X}_{u'}$.
\end{enumerate}
\end{proposition}
\begin{proof}
Let $u \in U$ and $u' \in N_5$ be such that there are only 320 conics
on $\mathcal{X}_{u'}$, the surfaces $\mathcal{X}_u$ and
$\mathcal{X}_{u'}$ are smooth, $\mathcal{X}_u$ contains the
configuration of 320 smooth conics and there is an injection of
lattices $$i:\pic{\mathcal{X}_u} \hookrightarrow
\pic{\mathcal{X}_{u'}}$$ which respects hyperplane classes. As we have
seen, these conditions are satisfied for very general points $u \in U$
and $u' \in N_5$, see \lemmaref{lemma:picinjections} for the part
about $i$. Let $C \in \pic{\mathcal{X}_u}$ be the class of a
conic. Since $i(C)^2=-2$, either $i(C)$ or $-i(C)$ is effective, see
\cite{BHPV} Chapter VIII Proposition 3.7 (i). But the degree of $i(C)$
is equal to 2, and hence $i(C)$ is effective. Since $i(C)^2=-2$,
$i(C)$ is represented either by an irreducible conic or a reducible
conic. But there are only 320 conics on $\mathcal{X}_{u'}$, and it
follows that there are no more conics on $\mathcal{X}_u$
either. Moreover, since $i$ maps the classes of the 320 smooth conics
on $\mathcal{X}_u$ onto the set of classes of conics on
$\mathcal{X}_{u'}$, we have that the configurations of conics on the
two surfaces coincide.
\end{proof}

\section{The Picard group} \label{sec:picardgroup} In this section we
determine the Picard group of a very general Heisenberg invariant
quartic and show that it is generated by the 320 conics on the
surface.

It has been known since over a hundred years that every curve $C$ on a
general enough Kummer surface $Y$ in $\PP^3$ is such that $C$ counted
with multiplicity 2 is a complete intersection between $Y$ and some
other surface in $\PP^3$, see \cite{H} Chapter XIII. A modern
treatment is given in \cite{GD,GD2}. In particular, every curve on $Y$
has even degree. To be more precise, if $Y \subset \PP^3$ is the
Kummer surface of a principally polarized Abelian surface $A$ with
Picard number 1, which is true for a very general point of the moduli
space of principally polarized Abelian surfaces, then every curve on
$Y$ has even degree (see \cite{GD2} Theorem 4.32). The Kummer surfaces
that appear in the family $\mathcal{X}$ are Heisenberg invariant but
this merely reflects a choice of coordinates on $\PP^3$ (see
\cite{GD2} Lemma 1.52).

\begin{lemma} \label{lemma:odd_degree}
For a very generic Heisenberg invariant quartic surface $X$, no curve
on $X$ has odd degree.
\end{lemma}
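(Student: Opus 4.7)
The plan is to specialize the Picard lattice of a very general smooth invariant quartic onto that of a desingularized Kummer surface appearing in the family, and then apply the classical theorem recalled above that every curve on a Kummer quartic has even degree. Arguing by contradiction, suppose $X=\mathcal{X}_v$ is very general and carries a divisor class $D$ of odd degree $D\cdot h$, where $h$ is the hyperplane class. The first step is to build a smooth proper one-parameter family $\mathcal{Z}\to B$ whose fibers away from a distinguished point $0\in B$ are smooth invariant quartics, and whose fiber over $0$ is the minimal desingularization $\beta:\widetilde{K}\to K$ of a generic Kummer surface $K=\mathcal{X}_{v_0}$, $v_0\in S_3$. Concretely, I would take a generic line $\ell\subset U$ through $v$ and $v_0$, restrict the family to $\ell$, and simultaneously resolve the sixteen threefold ordinary double points of $\mathcal{X}_\ell$ lying over the nodes of $\mathcal{X}_{v_0}$ (for generic $\ell$ these are the only singularities of $\mathcal{X}_\ell$ near $v_0$). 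The tautological line bundle $\mathcal{O}_{\mathcal{Z}}(1)$ inherited from $\mathcal{X}$ then restricts to $h$ on a general fiber and to $\beta^{\ast}h_K$ on $\widetilde{K}$, where $h_K$ is the hyperplane class of $K$.

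For generic $\ell$, the generic Picard rank of the $1$-parameter family $\mathcal{Z}\to B$ coincides with the generic Picard rank of $\mathcal{X}\to U$, namely 16 by \thmref{thm:picardnumber}, so a fiber which is very general in the sense of \thmref{thm:picardnumber} is automatically very general in $B$ in the sense of \cite{q_MPV}. The specialization results already used in \lemmaref{lemma:configuration} (\cite{q_MPV} Proposition 3.6 and \cite{q_SGA} X, (7.9.3)) therefore yield an isometric embedding of lattices $i:\pic{X}\hookrightarrow\pic{\widetilde{K}}$ with $i(h)=\beta^{\ast}h_K$, so that $i(D)\cdot\beta^{\ast}h_K=D\cdot h$ is still odd.

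It remains to verify that $\beta^{\ast}h_K$ pairs evenly with every class in $\pic{\widetilde{K}}$. By linearity it suffices to check this on irreducible curves $C\subset\widetilde{K}$, and for such $C$ the projection formula gives $C\cdot\beta^{\ast}h_K=\beta_{\ast}C\cdot h_K$. If $C$ is one of the sixteen exceptional $(-2)$-curves of $\beta$, then $\beta_{\ast}C=0$; otherwise $C$ maps birationally onto a curve in $K\subset\PP^3$, whose degree is even by the classical theorem cited above. Applying this to $C=i(D)$ contradicts that $D\cdot h$ is odd.

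The main obstacle is the construction of the smooth family $\mathcal{Z}\to B$: the family $\mathcal{X}\to U$ is not smooth over $S_3$, and one must carry out a simultaneous resolution of the sixteen threefold conifold singularities of $\mathcal{X}_\ell$ over $v_0$ while checking that the polarization class extends consistently and that the very-general hypothesis of \cite{q_MPV} Proposition 3.6 is met by the chosen $v$. Once this is in place, the rest of the argument is a direct application of the projection formula and the classical theorem on Kummer quartics.
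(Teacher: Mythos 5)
Your strategy is workable in outline but genuinely different from, and considerably heavier than, the proof in the paper. The paper avoids Picard-lattice specialization entirely: for each pair $(d,g)$ with $d$ odd it forms the incidence variety $I_{(d,g)}\subset \mathrm{Hilb}_{(d,g)}(\PP^3)\times U$ of pairs (curve, invariant quartic containing it); since the Hilbert scheme is projective, the image of $I_{(d,g)}$ in $U$ is closed, and it is a \emph{proper} closed subset because a general Heisenberg invariant Kummer surface (a general point of $S_3$) carries no curve of odd degree by the classical theorem. A very general $u$ then lies outside the countable union $\bigcup_{k,g}\pi(I_{(2k+1,g)})$. This uses only properness of the Hilbert scheme and the same classical input on Kummer quartics that you invoke, with no smooth degeneration, no simultaneous resolution, and no bookkeeping about where the specialization map is an isomorphism.

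As written, your construction of $\mathcal{Z}\to B$ has a genuine gap: for a generic line $\ell$ through $v$ and a generic $v_0\in S_3$, the threefold $\mathcal{X}_\ell$ is \emph{smooth} at the sixteen nodes of the central fiber (locally the family is $x^2+y^2+z^2=t\cdot(\text{unit})$, whose total space is nonsingular), so there are no threefold ordinary double points to resolve, and no birational modification of $\mathcal{X}_\ell$ alone yields a smooth proper family whose central fiber is the minimal resolution of $K$. You must first perform the degree-two base change $t=s^2$ (Atiyah--Brieskorn simultaneous resolution), after which the sixteen points become conifold points admitting small resolutions with central fiber $\widetilde{K}$. Two smaller points: the claim that very generality of $v$ in $U$ forces $\pic{\mathcal{Z}_{\bar\eta}}\to\pic{X}$ to be an isomorphism needs the torsion-freeness of the cokernel in \cite{q_MPV} Proposition 3.6 --- equality of ranks alone gives only a finite-index inclusion, which would ruin the parity argument; and the reduction to irreducible curves requires noting that $\pic{\widetilde{K}}$ is generated by classes of irreducible curves (e.g.\ differences of very ample classes). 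With these repairs your argument goes through, but the paper's route is strictly simpler.
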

\begin{proof}
Let $\textrm{Hilb}_{(d,g)}(\PP^3)$ denote the Hilbert scheme of curves
in $\PP^3$ with Hilbert polynomial $P(x)=dx+(1-g)$. Recall that
$\textrm{Hilb}_{(d,g)}(\PP^3)$ is projective, see \cite{Ko} Chapter I
1.4. Let $\mathcal{Y} \rightarrow \textrm{Hilb}_{(d,g)}(\PP^3)$ be the
universal family. Define an incidence $I_{(d,g)}$ by
$$\textrm{Hilb}_{(d,g)}(\PP^3) \times U \supseteq I_{(d,g)} =
\{(s,u):\mathcal{Y}_s\subset \mathcal{X}_u\},$$ and let $\pi :
\textrm{Hilb}_{(d,g)}(\PP^3) \times U \rightarrow U$ denote the
projection. Now, since for odd $d$ and any $g$, the parameter point of
a general enough Heisenberg invariant Kummer surface is not in
$\pi(I_{(d,g)})$, we have that $\pi(I_{(d,g)}) \neq U$ if $d$ is
odd. A surface whose parameter point is outside the countable union
$\bigcup_{k,g} \pi(I_{(2k+1,g)})$ has no curve of odd degree.
\end{proof}

Finite Abelian groups with a symplectic action on some K3 surface have
been classified by Nikulin \cite{Nik}. In that paper, Theorem 4.7,
it is shown that if $G$ is finite Abelian and acts symplectically on a
K3 surface $X$, then the induced action of $G$ on $H^2(X,\ZZ)$ is
independent of the surface, up to isomorphism of lattices. It follows
that the lattices $H^2(X,\ZZ)^G$ and $\Omega_G=(H^2(X,\ZZ)^G)^{\perp}$
are determined by $G$ up to isomorphism. In \cite{Sa} these lattices
are worked out in each case of the classification and in the case of
the Heisenberg group, $\Omega_H$ turns out to be well known. It is
isomorphic to $-\Lambda_{15}$, where $\Lambda_{15}$ is the so-called
laminated lattice of rank $15$, see \cite{C}.

\begin{remark}
The discriminant of $\Lambda_{15}$ is $2^9$ and $\Lambda_{15}$ is
positive definite, see \cite{C} for a detailed treatment. In one
basis, the bilinear form on $\Lambda_{15}$ is given by
{\small
\begin{displaymath}
\left(
\begin{array}{*{15}K}
4&-2&0&0&0&0&0&0&0&0&0&0&0&1&1\\
-2&4&-2&2&0&0&0&0&0&0&0&0&-1&0&0\\
0&-2&4&0&0&2&0&0&0&0&0&0&2&1&1\\
0&2&0&4&2&2&0&0&0&0&0&0&0&1&1\\
0&0&0&2&4&2&0&0&2&1&0&0&0&0&0\\
0&0&2&2&2&4&2&2&1&2&0&0&1&1&2\\
0&0&0&0&0&2&4&2&0&2&0&0&0&-1&1\\
0&0&0&0&0&2&2&4&0&2&0&0&1&0&2\\
0&0&0&0&2&1&0&0&4&2&0&0&0&0&0\\
0&0&0&0&1&2&2&2&2&4&2&2&1&1&2\\
0&0&0&0&0&0&0&0&0&2&4&2&2&1&1\\
0&0&0&0&0&0&0&0&0&2&2&4&1&2&2\\
0&-1&2&0&0&1&0&1&0&1&2&1&4&0&2\\
1&0&1&1&0&1&-1&0&0&1&1&2&0&4&2\\
1&0&1&1&0&2&1&2&0&2&1&2&2&2&4
\end{array}
\right).
\end{displaymath}
}
\end{remark}

\begin{theorem}
Let $X$ be a very generic Heisenberg invariant quartic and let $h \in
\pic{X}$ be the hyperplane class. Then
\begin{enumerate}
\item the sublattice $\ZZ h  \oplus \Omega_H \subset
\pic{X}$ has index $2$,
\item $\discr{\pic{X}}=-2^9$.
\end{enumerate}
\end{theorem}
\begin{proof}
We first show that the group $\pic{X}/(\ZZ h \oplus \Omega_H)$ is
cyclic. Since $\pic{X}^H=\ZZ h$ and $H^2(X,\ZZ)^H$ has rank 7, $\ZZ h
\oplus T_X \subseteq H^2(X,\ZZ)^H$ is a full rank sublattice. If $l
\in \pic{X}$ and $lh=0$, then $l$ kills $\ZZ h \oplus T_X$ and
therefore $l \in \Omega_H$. Since $\Omega_H \subset \pic{X}$, it
follows that $\Omega_H$ is the orthogonal complement of $\ZZ h$ in
$\pic{X}$. Observe that $\Omega_H$ is a primitive sublattice of
$\pic{X}$, that is if $nl \in \Omega_H$ for some non-zero $n \in \ZZ$
and some $l \in \pic{X}$, then $l \in \Omega_H$. In other words
$\pic{X}/\Omega_H$ is torsion free. Because $\pic{X}/\Omega_H$ has
rank 1, there is an element $l \in \pic{X}$ such that $\pic{X}=\ZZ l
\oplus \Omega_H$, as Abelian groups. The class of $l$ in $\pic{X}/(\ZZ
h \oplus \Omega_H)$ generates $\pic{X}/(\ZZ h \oplus
\Omega_H)$. Further, note that for $v \in (\ZZ h \oplus \Omega_H)$,
$v=nh+\omega$ with $\omega \in \Omega_H$ and $n \in \ZZ$, we have that
$vh=nh^2=4n$. If it were true that $\pic{X}=\ZZ h \oplus \Omega_H$
then the degree of any curve on $X$ would be a multiple of 4, but this
is not the case since $X$ contains a conic. Now let $D \in
\pic{X}$. It follows by \lemmaref{lemma:odd_degree} that $Dh$ is even,
say $Dh=2m$. Then $(2D-mh)h=0$ and hence $(2D-mh) \in
\Omega_H$. Consequently, $2D \in \ZZ h \oplus \Omega_H$ and thus $\ZZ
h \oplus \Omega_H$ has index 2 in $\pic{X}$.

Since $$\discr{{\ZZ h \oplus \Omega_H}}=4\cdot
\discr{\Omega_H}=-2^{11}$$ and $$\discr{\ZZ h \oplus \Omega_H}/
\discr{\pic{X}}=2^2,$$ we have that $\discr{\pic{X}}=-2^9$.
\end{proof}

\begin{corollary}
The Picard group of a very generic Heisenberg invariant quartic $X$ is
generated by the 320 conics.
\end{corollary}
\begin{proof}
By \propref{prop:configuration}, some set of 16 conics on $X$
correspond to the 16 conics of \remref{rem:16conics}. Let $P \subseteq
\pic{X}$ be the sublattice generated by these 16 conics on
$X$. Because $\discr{P}=\discr{\pic{X}}$, the index of $P$ in
$\pic{X}$ is equal to 1, that is $P=\pic{X}$.
\end{proof}

The proof of the corollary shows that (in some basis) the lattice
structure of the Picard group of a very generic Heisenberg invariant
quartic is given by the matrix $M$ in \remref{rem:16conics}.

\end{document}